\setlist[enumerate]{leftmargin=25pt}
\setlist[itemize]{leftmargin=25pt}
\theoremstyle{plain} \newtheorem{Thm}{Theorem}[section]
\newtheorem{lem}[Thm]{Lemma}
\newtheorem{cor}[Thm]{Corollary}
\theoremstyle{definition}
\newtheorem*{theorem*}{Theorem}
\theoremstyle{remark}
\newcommand{\comment}[1]{}
\def\N{\mathbb{N}}
\def\Q{\mathbb{Q}}
\def\Z{\mathbb{Z}}
\def\R{\mathbb{R}}
\def\La{\Lambda}
\def\la{\lambda}
\def\stb{,\ldots ,}
\def\stb{,\ldots ,}
\newcommand\blfootnote[1]{%
  \begingroup
  \renewcommand\thefootnote{}\footnote{#1}%
  \addtocounter{footnote}{-1}%
  \endgroup
}
\title{Fuglede's conjecture holds on
$\Z_p^2 \times \Z_q$}
\author{Gergely Kiss \thanks{Alfr\'ed R\'enyi Institute of Mathematics, e-mail: kigergo57@gmail.com}
\qquad
G\'abor Somlai \thanks{E\"otv\"os Lor\'and University, Faculty of Science, Institute of Mathematics,
 	e-mail: zsomlei@caesar.elte.hu}}
\begin{document}
\maketitle

\begin{abstract} The study of Fuglede's conjecture on the direct product of elementary abelian groups was initiated by Iosevich et al. For the product of two elementary abelian groups the conjecture holds. For $\Z_p^3$ the problem is still open if $p\ge 11$. In connection we prove that Fuglede's conjecture holds on $\Z_{p}^2\times \Z_q$ by developing a method based on ideas from discrete geometry. \end{abstract}

\blfootnote{Keywords: spectral set, tiling, finite abelian groups, Fuglede's conjecture  }%
\blfootnote{%
    AMS Subject Classification (2010):
      43A40, 43A75, 52C22, 05B25}

\section{Introduction}
Let $\Omega$ be a bounded measurable set with positive Lebesgue measure in $\R^n$. We say that $\Omega$ is a {\it tile} if there exists $T \subset \R^n$ such that for almost every element $x$ of $\R^n$ we can uniquely write $x$ as the sum of an element of $\Omega$ and $T$.
In this case $T$ is called the {\it tiling complement} of $\Omega$.
We say that $\Omega$ is {\it spectral} if there is a base of $L^2(\Omega)$ consisting only of exponential functions $\{f(x)=e^{2\pi i<x,\la>}|\lambda\in \Lambda\}$. In this case $\La$ is called a {\it spectrum} for $\Omega$.
The classical Fuglede conjecture \cite{F1974} states that the spectral sets are the tiles in $\R^n$.

The conjecture was motivated by the result of Fuglede \cite{F1974} that if $\Omega$ is a tile with a tiling complement, which is a lattice, then $\Omega$ is spectral.
After some valuable positive results, Tao \cite{T2004} disproved the conjecture by showing a spectral set which is not a tile in $\mathbb{R}^n$ for $n\ge 5$. This was improved in two ways. Firstly, there were found some non-tiling spectral sets
in $\R^n$ for $n\ge 4$ in \cite{Matolcsi2005} and later $n\ge 3$ in \cite{KMkishalmaz}.
Secondly, there were shown non-spectral tiles in $\mathbb{R}^n$ for $n\ge 3$ \cite{FMM2006} (for further references see \cite{FR2006, KM2}).
All of these counterexamples are based on {\bf Fuglede's conjecture on finite Abelian groups}. We remarkably note that both directions of the conjecture are still open in $\R$ and $\R^2$.

Let $G$ be a finite Abelian group and $\widehat{G}$ the set of irreducible representations of $G$, which can be considered as a group and it is isomorphic to $G$. The elements of $\widehat{G}$ are indexed by the elements of $G$. Then $S\subset G$ is {\it spectral} if and only if there exists a $\La\in G$ such that ($\chi_l)_{l\in \La}$ is an orthogonal base of complex valued functions defined on $S$. It is worth to note that if $\La$ is a spectrum for $S$, then $S$ is a spectrum for $\La$, and we say that $(S,\La)$ is a {\it spectral pair}. It follows simply that $|S|=|\La|$. On the other hand, if $S$ and $\La$ satisfy the following conditions: $|S|=|\La|$ and the elements of $(\chi_l)_{l\in \La}$ are pairwise orthogonal on $S$, then $(S,\La)$ is a spectral pair.

For a finite group $G$ and a subset $S$ of $G$ we say that $S$ is {\it a tile} of $G$ if there is a $T \subset G$ such that $S+T=G$ and $|S|\cdot |T|=|G|$. This we denote by $S\bigoplus T$. The discrete version of the problem is the following: which finite abelian groups satisfy that the spectral sets and the tiles coincide.

The case of finite cyclic groups is particularly interesting since Dutkay and Lai \cite{DutkayLai} showed that the tile-spectral direction of Fuglede's conjecture on $\R$ holds if and only if the discrete version holds for every finite cyclic group.
They also showed that if the spectral-tile direction holds on $\R$, then it holds for every finite abelian group.

Tao's \cite{T2004} example for a spectral set which does not tile comes from a non-tiling spectral set in $\Z_3^5$ so it makes sense to investigate elementary abelian $p$-groups. It was proved in \cite{Z_p^2} that for $\Z_p^2$ Fuglede's conjecture holds for every prime $p$. On the other hand, it was shown in \cite{atenetal}
that the spectral tile direction of the conjecture does not hold for $\Z_p^5$ if $p$ is an odd prime and for $\Z_p^4$ if $p \equiv 3 \pmod{4}$ is a prime.
This was strengthened by Ferguson and Sothanaphan by exhibiting a non-spectral tile for $\Z_p^4$, see \cite{fergsoth2019}.
If $p=2$, then the situation is slightly different. It was shown that Fuglede's conjecture fails for $\Z_2^d$ if $d\ge 10$ \cite{fergsoth2019}, and holds if $d\le 6$ \cite{fergsoth2019,fsadd}. For $7\le d\le 9$ the answer is not known.

The question whether Fuglede's conjecture holds for $\Z_p^3$, when $p$ is an odd prime, is still widely open, although partial results have been obtained recently for $p \le 5$  in \cite{birklbauer} and for $p \le 7$ in \cite{zp37} .
Note that the tile-spectral conjecture holds for $\Z_p^3$, see \cite{atenetal}.

R. Shi investigated the mixed direct products. He verified the conjecture for $\Z_{p^2} \times \Z_p$, recently. Reaching closer to decide the validity of Fuglede's conjecture for $\Z_p^3$, in our paper we prove the following.  
\begin{Thm}\label{thm1}
Fuglede's conjecture holds on $\mathbb{Z}_p^2 \times \mathbb{Z}_q$, where $q$ and $p$ are different primes.
\end{Thm}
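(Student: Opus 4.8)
The plan is to prove the equivalence ``$S$ is spectral $\Leftrightarrow$ $S$ is a tile'' on $G=\Z_p^2\times\Z_q$ through the Fourier dictionary, organised by $|S|$. Writing $\widehat G\cong\Z_p^2\times\Z_q$ and $Z(S)=\{\gamma\in\widehat G:\widehat{\mathbf{1}_S}(\gamma)=0\}$, tiling $S\oplus T=G$ means $|S|\,|T|=|G|$ together with $Z(S)\cup Z(T)=\widehat G\setminus\{0\}$, whereas $(S,\Lambda)$ is a spectral pair iff $|\Lambda|=|S|$ and $(\Lambda-\Lambda)\setminus\{0\}\subseteq Z(S)$. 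Since $|G|=p^2q$, after discarding points and the full group it remains to settle the four cardinalities $|S|\in\{p,q,p^2,pq\}$, and I would split them into the prime cardinalities $p,q$ and the composite cardinalities $pq,p^2$.

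Three tools run through the argument. The first is the affine-plane geometry of $\Z_p^2$: a line (order-$p$ subgroup) $\ell$ has $\widehat{\mathbf{1}_\ell}$ supported exactly on its annihilator $\ell^\perp$, the nonzero dual characters fall into $p+1$ directions, and a union of line-cosets has a large, explicitly known zero set. The second is the Lam--Leung structure of vanishing sums of $N$-th roots of unity for $N=pq$: every relation $\widehat{\mathbf{1}_S}(a,b)=0$ is a nonnegative-integer combination of the two primitive relations $\sum_{i=0}^{p-1}\zeta_p^{\,i}=0$ and $\sum_{j=0}^{q-1}\zeta_q^{\,j}=0$, so the number of contributing terms is $mp+nq$. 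The third is the layer decomposition over the $\Z_q$-coordinate: with $S_t=\{x\in\Z_p^2:(x,t)\in S\}$ one has $\widehat{\mathbf{1}_S}(a,b)=\sum_{t\in\Z_q}\omega_q^{\,bt}\,\widehat{\mathbf{1}_{S_t}}(a)$, the $\Z_q$-transform of $t\mapsto\widehat{\mathbf{1}_{S_t}}(a)$, so each vanishing condition couples plane geometry in $a$ with a length-$q$ vanishing sum in $b$.

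For the prime cardinalities $|S|\in\{p,q\}$ the combinatorics is rigid enough to treat by hand with these tools. When $|S|=p$, coprimality of $p$ and $q$ lets me isolate the planar behaviour and reduce to $p$-element subsets of $\Z_p^2$, where the conjecture is already known, lifting the equivalence back to $G$. When $|S|=q$, the layer decomposition together with the Lam--Leung term count pins down the few admissible shapes of $S$, and one verifies directly that each is simultaneously a tile and spectral. I would carry out both directions here explicitly, since the small mass leaves little room for the configurations to diverge.

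The crux, and where genuinely discrete-geometric input is needed, is the composite cardinalities $|S|=pq$ and $|S|=p^2$; here I expect the main obstacle, and both implications hinge on a single structural classification. The goal is to prove that a set meeting the tiling or the spectral Fourier condition at these sizes must have one of a short list of geometric forms---up to the symmetries of $G$, either $S$ fibres over a planar size-$p$ tile (so $S$ is essentially a product and tiles by inheritance from the known $\Z_p^2$ case), or its layers are cosets of one fixed line $\ell$, coherently threaded through the $\Z_q$-direction so that a size-$p$ transversal completes a tiling. To reach this I would feed the orthogonality (resp.\ tiling) relations through the layer decomposition, prune them with the Lam--Leung count to control how the mass spreads among the $q$ layers and the $p+1$ directions, and then invoke a packing lemma in the affine plane forcing the layers into aligned line-cosets. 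The hard part is proving this classification \emph{exhaustive}---ruling out ``mixed'' sets that would satisfy the orthogonality relations yet fail to tile, or conversely---since the two primes interact and the zero set $Z(S)$ can a priori be intricate; once the classification is in hand, both directions of the conjecture at these sizes follow at once, completing the proof.
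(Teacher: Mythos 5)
Your toolkit is exactly the paper's --- the zero-set dictionary, Lam--Leung for vanishing sums of $pq$-th roots of unity, the layer decomposition over the $\Z_q$-coordinate, and the direction geometry of the affine plane $\Z_p^2$ --- but the proposal has a structural gap that invalidates its organization. The reduction to the four cardinalities $|S|\in\{p,q,p^2,pq\}$ is legitimate only in the tile $\Rightarrow$ spectral direction, where $|S|$ divides $|G|=p^2q$. In the spectral $\Rightarrow$ tile direction nothing forces the size of a spectral set to divide $|G|$, and ruling out the non-divisor cardinalities is where most of the paper's work lies: it splits by $\gcd(|G|,|S|)$ and must eliminate, for instance, spectral sets with $|S|$ coprime to $pq$ (Lam--Leung gives $|S|=kp+lq$, and $1<|S|<q$ forces $p\mid|S|$, a contradiction), sets with $q\mid|S|$, $p\nmid|S|$, $|S|\neq q$ (at most one point per $\Z_p^2$-coset forces $|S|\le q$), and above all the family $|S|=kp$ with $2\le k<q$, $p^2\nmid|S|$, $q\nmid|S|$. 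This last family has no tiling counterpart at all, yet excluding it is the longest and most delicate argument in the paper: one shows every $\Z_p^2$-coset meets $S$ in $0$ or $p$ points, then that every $\langle(a,0)\rangle$-coset meets $S$ in $0$ or $k$ points for a suitable direction $a$, whence $k\mid p$, so $k=p$, contradicting $p^2\nmid|S|$. Your plan, organized purely by the divisor cardinalities, never sees these sets.

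The second gap is that even within your four cardinalities the crux is a program, not a proof: for $|S|\in\{pq,p^2\}$ you propose a structural classification and state yourself that its exhaustiveness (ruling out ``mixed'' configurations) is the hard part, left open. It is worth noting that the paper needs no such classification; these sizes fall to short direct arguments. For $p^2\mid|S|$, $q\nmid |S|$: either the projection $S\to\Z_p^2$ is injective (then $|S|=p^2$ and $S$ tiles with complement $\{0\}\times\Z_q$) or some difference in $S-S$ has order $q$, forcing $q\mid|\Lambda|=|S|$, a contradiction. For $pq\mid|S|$, $p^2\nmid|S|$: either some $\Z_p^2$-coset contains more than $p$ points of $S$ (then by pigeonhole every direction of $\Z_p^2$ occurs in $S-S$, all of $\widehat{\Z}_p^2\setminus\{1\}$ vanishes on $\Lambda$, and $p^2\mid|\Lambda|=|S|$, a contradiction) or some direction $(a,0)$ is absent from $S-S$, and then $\langle(a,0)\rangle$ is itself a tiling complement of $S$. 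Similarly, in the tile $\Rightarrow$ spectral direction your ``inheritance from $\Z_p^2$'' is not enough for the factor of size $pq$: for $A\bigoplus B=G$ with $|A|=p$, $|B|=pq$, the paper must prove via the Lam--Leung projection that $A$ lies in $\Z_p^2$ and that $B\bigoplus\langle(a,0)\rangle=G$ for a suitable $a$, and then invoke the lemma that a set admitting a subgroup as tiling complement is spectral. So while your instincts about the tools are right, the proposal as written neither covers all cardinalities a spectral set can have nor completes its own central step.
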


\section{Irreducible Representations of \texorpdfstring{$\mathbb{Z}_p^2 \times \mathbb{Z}_q$}{}}
Let $G=\mathbb{Z}_p^2 \times \mathbb{Z}_q$. We can consider the elements of $G$ as pairs $(u,v)$, where $u \in \Z_p^2$ and $v \in \Z_q$. We call $v$ the $q$-coordinate of $(u,v)$.
The irreducible representations of $G$ are indexed by the elements of $G$ so for every $(a,b)\in \Z_p^2\times\Z_q$ let the character $\chi_{(a,b)}$ be defined as
\begin{equation}\label{eqeki}
\chi_{(a,b)}(u,v)=e^{\frac{2\pi i}{p}\langle u,a\rangle}e^{\frac{2\pi i}{q}v\cdot b} ~~~((u,v)\in \Z_p^2\times\Z_q),
\end{equation}
where $\langle u,a\rangle$ denotes the natural scalar product of the vectors $u,a \in \Z_p^2$. This description is in accord with the well-known fact that for every abelian group $H$ the set of irreducible representations  (denoted by $\widehat{H}$) is isomorphic to $H$.

We denote the order of $g\in G$ by $o(g)$.
We can distinguish $4$ different types of irreducible representations according to their order. Namely, $o(\chi) \in \{1,p,q,pq\}$.
The trivial representation is the only one of order $1$. According to the parametrization given in \eqref{eqeki}, the characters of order $p$ are precisely those of the form $\chi_{(a,0)}$, where $a \in \Z_p^2\setminus\{0\}$. Similarly, the characters of order $q$ can be written as $\chi_{(0,b)}$ with $b \in \Z_q\setminus\{0\}$. Finally, the characters of order $pq$ are the $\{ \chi_{(a,b)}  \mid (a,b)\in G,~ a \ne 0, ~ b \ne 0\}.$ Now we describe the kernel of the nontrivial irreducible representations.

\noindent
{\bf \underline{Equidistributed property:}} Let $M=\{(u_i,v_i) \mid i \in I \}$
 be a multiset (i.e, some $(u_i, v_i)$ may coincide) and $\chi_{(a,0)} \in \widehat{G}$ of order $p$. Assume $M$ is in the kernel of $\chi_{(a,0)}$. This means that $$\chi_{(a,0)}(M):=\sum_{i \in I} \chi_{(a,0)}\big(\left(u_i,v_i\right)\big)=\sum_{i \in I} e^{\frac{2\pi i}{p}\langle u_i,a\rangle}=0.$$
The minimal polynomial of $e^{\frac{2\pi i}{p}}$ over $\Q$ is $ \sum_{j =0}^{p-1}x^j$. It implies that  $|M_k|=|\{ (u,v) \in M \mid \langle u,a\rangle \equiv k \pmod{p}\}|=\frac{|M|}{p}$
 for each $k=0 \stb p-1$. In this case we say that $M$ is {\it equidistributed} on the cosets of the subgroup $\{(u,v) \in \Z_p^2 \times \Z_q \mid \langle u,a\rangle=0\}$. 

Let $b \in \Z_q \setminus \{0\}$. A similar argument as before shows that $\chi_{(0,b)}(M)=0$ if and only if $M$ is equidistributed on the cosets of $\Z_p^2$, which is the unique subgroup of $\Z_p^2\times \Z_q$ of order $p^2$. Hence $|\{(u,v) \in M \mid v=l \}|=\frac{|M|}{q}$ for every $l \in \Z_q$.
\begin{cor}\label{cor1}
Let $\chi$ be an irreducible representation of $G$ and $M$ a multiset on $G$. If $\chi(M)=0$ and $o(\chi)=p$ (resp. $o(\chi)=q$), then $p \mid |M|$ (resp. $q \mid |M|$).
\end{cor}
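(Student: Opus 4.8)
The plan is to read this off directly from the Equidistributed property established immediately above, the only extra ingredient being an integrality observation. The corollary is really the statement that ``equidistribution forces equal integer-sized parts, hence divisibility.''

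First I would treat the case $o(\chi)=p$. By the classification of characters by order, a representation of order $p$ must be of the form $\chi=\chi_{(a,0)}$ with $a\in\Z_p^2\setminus\{0\}$. The hypothesis $\chi(M)=0$ is then exactly the vanishing condition
\[
\sum_{i\in I} e^{\frac{2\pi i}{p}\langle u_i,a\rangle}=0
\]
analysed above, where one uses that the minimal polynomial of $e^{2\pi i/p}$ over $\Q$ is $\sum_{j=0}^{p-1}x^j$, all of whose coefficients are equal. As recorded in the Equidistributed property, this forces $|M_k|=|M|/p$ for every residue $k=0,\ldots,p-1$, where $M_k=\{(u,v)\in M\mid \langle u,a\rangle\equiv k\pmod p\}$. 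The decisive point is that each $M_k$ is a sub-multiset of $M$, so $|M_k|$ is a non-negative integer; consequently $|M|/p$ is an integer and $p\mid|M|$.

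The case $o(\chi)=q$ is entirely symmetric. Here $\chi=\chi_{(0,b)}$ with $b\in\Z_q\setminus\{0\}$, and $\chi(M)=0$ gives, again by the Equidistributed property, that $|\{(u,v)\in M\mid v=l\}|=|M|/q$ for each $l\in\Z_q$; integrality of these counts yields $q\mid|M|$. I do not expect any genuine obstacle in this argument: everything beyond the cyclotomic computation already done in the text reduces to observing that a cardinality is an integer. The only thing worth stating carefully is that the conclusion is valid for multisets as well as sets, since the counts $|M_k|$ are taken with multiplicity, which is exactly how the Equidistributed property was phrased.
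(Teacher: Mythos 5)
Your proposal is correct and matches the paper's own (implicit) justification exactly: the corollary is stated as an immediate consequence of the Equidistributed property, where $\chi(M)=0$ forces each of the $p$ (resp.\ $q$) classes $M_k$ to have size $|M|/p$ (resp.\ $|M|/q$), and integrality of these counts gives the divisibility. Your added emphasis that the counts are taken with multiplicity, so the argument is valid for multisets, is precisely the reading the paper intends.
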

Finally, let $\chi_{(a,b)}$ be an irreducible representation of $G$ of order $pq$ and $M$ a multiset on $G$. Assume $\chi_{(a,b)}(M)=0$. Let $H$ be a subgroup isomorphic to $\Z_{pq}$ generated by $(a,0)$ and $(0,b)$, where $a \ne 0$ and $b \ne 0$. We define the {\it projection} $M_{(a,b)}$ of $M$ on $H$ as follows
 \begin{equation}\label{eqproj} M_{(a,b)}((u,v)) = |\{(x,v) \in M \mid  \langle x-u,a\rangle=0 \}|, \end{equation} where $u$ is a multiple of $a \in \Z_p^2$ and $v\in\Z_q$. The multiset $M_{(a,b)}$ can be considered as a nonnegative integer valued function on $\Z_{pq}$.

The lemma below is due to Lam and Leung \cite{La}. It can also be deduced from Theorem 4.9 in \cite{mi}.
\begin{lem}\label{twoprimes}
If $\chi_{(a,b)}(M)=0$, then the multiset $M_{(a,b)}$ is the weighted sum of $\Z_{p}$- and $\Z_{q}$-cosets with nonnegative integer coefficients. Thus the cardinality of $S$ satisfies $|S|=pk+q\ell$, for some $k,\ell \geq 0$.
\end{lem}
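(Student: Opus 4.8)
The plan is to convert the hypothesis $\chi_{(a,b)}(M)=0$ into a vanishing sum of $pq$-th roots of unity with nonnegative integer coefficients, and then to show that any such sum is supported on full $\Z_p$- and $\Z_q$-coset sums. Write $\omega_p=e^{2\pi i/p}$ and $\omega_q=e^{2\pi i/q}$. Since $\chi_{(a,b)}(x,v)=\omega_p^{\langle x,a\rangle}\omega_q^{vb}$ depends only on the pair $(s,w)$ with $s\equiv\langle x,a\rangle\pmod p$ and $w\equiv vb\pmod q$, the character is constant on the fibres of the map $(x,v)\mapsto(s,w)$, whose fibres are exactly the cosets of $\ker\chi_{(a,b)}$. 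Grouping $M$ by these pairs and writing $c_{s,w}\ge 0$ for the number of elements of $M$ in class $(s,w)$, the hypothesis becomes $\sum_{s\in\Z_p}\sum_{w\in\Z_q}c_{s,w}\,\omega_p^{s}\omega_q^{w}=0$. Because $b\neq 0$ makes $v\mapsto vb$ a bijection of $\Z_q$ and $a\neq 0$ makes $x\mapsto\langle x,a\rangle$ onto $\Z_p$, these multiplicities $(c_{s,w})$ are precisely the values of $M_{(a,b)}$ under the identification $H\cong\Z_{pq}\cong\Z_p\times\Z_q$, so it suffices to decompose the array $(c_{s,w})$.

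The core is the two-prime case of the Lam--Leung statement, which I would prove directly. Set $A_w=\sum_{s}c_{s,w}\omega_p^{s}\in\Z[\omega_p]\subset\Q(\omega_p)$, so that $\sum_{w=0}^{q-1}A_w\omega_q^{w}=0$. Since $\gcd(p,q)=1$, the elements $1,\omega_q,\dots,\omega_q^{q-2}$ form a basis of $\Q(\omega_p,\omega_q)$ over $\Q(\omega_p)$; substituting $\omega_q^{q-1}=-(1+\omega_q+\dots+\omega_q^{q-2})$ and comparing coefficients in this basis forces $A_0=A_1=\dots=A_{q-1}$. The only $\Z$-linear relation among $1,\omega_p,\dots,\omega_p^{p-1}$ is $\sum_s\omega_p^s=0$, so $A_w=A_0$ means $(c_{s,w}-c_{s,0})_s$ is constant in $s$; hence there are integers $t_w$ (with $t_0=0$) and $r_s:=c_{s,0}$ with
\[ c_{s,w}=r_s+t_w\qquad(s\in\Z_p,\ w\in\Z_q). \]

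From the separated form nonnegativity follows cheaply. Replacing $t_w$ by $t_w-\min_k t_k\ge 0$ and absorbing $\min_k t_k$ into each $r_s$ leaves the entries $c_{s,w}$ unchanged; evaluating at a column where the new $t_w=0$ shows each $r_s=c_{s,w}\ge 0$ there, so we may assume $r_s\ge 0$ and $t_w\ge 0$. Then
\[ c_{s,w}=\sum_{s_0\in\Z_p} r_{s_0}\,\mathbf 1[s=s_0]+\sum_{w_0\in\Z_q} t_{w_0}\,\mathbf 1[w=w_0], \]
which exhibits $M_{(a,b)}$ as a nonnegative integer combination of full rows (the $\Z_q$-cosets, of size $q$) and full columns (the $\Z_p$-cosets, of size $p$). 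Summing all entries, $|S|=|M|=\sum_{s,w}c_{s,w}=q\sum_s r_s+p\sum_w t_w=p k+q\ell$ with $k=\sum_w t_w\ge 0$ and $\ell=\sum_s r_s\ge 0$, as claimed.

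The genuinely hard point, and the whole content of Lam--Leung, is \textbf{the nonnegativity}: the $\Q$-span of the $\Z_p$- and $\Z_q$-coset vectors is easily seen to coincide with the full relation space by a dimension count ($pq-(p-1)(q-1)=p+q-1$), but passing from an integer relation to a \emph{nonnegative} integer combination is the real obstacle. For two primes it is defeated by the separation $c_{s,w}=r_s+t_w$ derived above; for three or more primes this separation fails and one must invoke the full Lam--Leung machinery (or Theorem~4.9 of \cite{mi}), which is why the argument here is special to the two-prime situation relevant to $G=\Z_p^2\times\Z_q$.
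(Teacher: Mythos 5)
Your proof is correct, but it takes a genuinely different route from the paper: the paper offers no proof of this lemma at all, simply attributing it to Lam and Leung \cite{La} and noting that it also follows from Theorem 4.9 of \cite{mi}, whereas you prove the two-prime case from scratch. Your steps all hold up: grouping $M$ by the fibres of $(x,v)\mapsto(\langle x,a\rangle,vb)$ turns the hypothesis into $\sum_{s,w}c_{s,w}\,\omega_p^s\omega_q^w=0$ with $c_{s,w}\in\Z_{\ge 0}$; since $\gcd(p,q)=1$ gives $[\Q(\omega_p,\omega_q):\Q(\omega_p)]=q-1$, the $q$-th cyclotomic polynomial remains the minimal polynomial of $\omega_q$ over $\Q(\omega_p)$, which legitimately forces $A_0=\cdots=A_{q-1}$; the one-dimensionality of the relation space of $1,\omega_p,\dots,\omega_p^{p-1}$ over $\Q$ yields the separated form $c_{s,w}=r_s+t_w$ with $t_w\in\Z$; and shifting by $\min_w t_w$ settles nonnegativity, which you correctly identify as the only substantive issue. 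The trade-off: your argument is elementary and self-contained, and it makes visible exactly why two primes are special --- the additive separation $c_{s,w}=r_s+t_w$ has no analogue for three or more primes, where the coset-decomposition conclusion is in fact false (e.g. $\omega_2+\sum_{j=1}^{2}\sum_{k=1}^{4}\omega_3^{j}\omega_5^{k}=0$ is a vanishing sum of $30$-th roots of unity with $0$--$1$ coefficients containing no full coset), and only the cardinality statement survives in general, which is what the full Lam--Leung theorem provides for arbitrary $n$; the paper's citation buys brevity and that generality. A minor bonus of your formulation: the paper's definition \eqref{eqproj} indexes the projection by the multiples $u$ of $a$, which degenerates when $\langle a,a\rangle=0$ (possible in $\Z_p^2$, e.g. when $p\equiv 1\pmod 4$), while your fibre description $(s,w)=(\langle x,a\rangle,vb)$ is the robust formulation and is exactly what the lemma needs.
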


\section{Tile-Spectral}

The purpose of this section is to prove that every tile in $\Z_p^2\times \Z_q$ is a spectral set.

Let $A \bigoplus B=G$ be a tiling of $G$. Without loss of generality we assume $0 \in A \cap B$. Note that since the elements of $A+B$ are different, it follows $A \cap B=\{ 0\}$. We denote by ${\bf 1}_H$ the characteristic function of the set $H$ and by $\widehat{f}$ the Fourier transform of $f$. Then  $\widehat{{\bf 1}}_A\cdot\widehat{{\bf 1}}_B=|G|\cdot\delta_{1}$, where $\delta$ denotes the Dirac delta and $1$ denotes the trivial representation of $G$. The following is well-known.
\begin{lem}\label{lemcons}
Let $C:H\to \mathbb{N}$ be a multiset, where $H$ is an abelian group.
Then $\chi(C):=\sum_{h\in H}C(h)\chi(h)=0$ for every $1 \ne \chi \in \widehat{H}$ if and only if $C$ is constant on $H$.
\end{lem}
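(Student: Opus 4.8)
The plan is to recognize this as the statement that a function on a finite abelian group is constant precisely when its Fourier transform is supported on the trivial character, and to prove it by a direct appeal to character orthogonality together with Fourier inversion. Throughout I would regard $C$ as a complex-valued (in the case at hand, nonnegative integer) function on the finite abelian group $H$, and use that the irreducible characters $\{\chi : \chi \in \widehat{H}\}$ form an orthogonal basis of the space of such functions.

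For the easy direction I would assume $C\equiv c$ is constant and compute, for any nontrivial $\chi$,
$$\chi(C) = \sum_{h \in H} c\,\chi(h) = c \sum_{h \in H} \chi(h) = 0,$$
using the standard fact that the sum of a nontrivial character over the whole group vanishes; this follows by picking $g$ with $\chi(g)\neq 1$, observing that multiplying the sum by $\chi(g)$ reindexes and leaves it unchanged, and concluding the sum must be $0$.

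For the converse I would introduce the Fourier coefficients $\widehat{C}(\chi) = \sum_{h\in H} C(h)\,\overline{\chi(h)}$ and recall the inversion formula $C(h) = \tfrac{1}{|H|}\sum_{\chi\in\widehat{H}}\widehat{C}(\chi)\,\chi(h)$. The key observation is that the set of nontrivial characters is closed under $\chi \mapsto \overline{\chi} = \chi^{-1}$, so for every nontrivial $\psi$ the hypothesis applied to $\chi=\psi^{-1}$ yields $\widehat{C}(\psi) = \psi^{-1}(C) = 0$. Hence only the trivial coefficient $\widehat{C}(1) = \sum_{h} C(h)$ survives, and inversion collapses to $C(h) = \widehat{C}(1)/|H|$ for every $h$, i.e.\ $C$ is constant with value $|C|/|H|$.

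There is no genuine obstacle here: the lemma is a clean consequence of the orthogonality relations, and the only point requiring mild care is the bookkeeping with complex conjugates, namely transferring the hypothesis phrased for $\chi(C)$ to the coefficients $\widehat{C}(\psi)$ defined using $\overline{\chi}$. Because the inverse of a nontrivial character is again nontrivial, this transfer is immediate and the argument goes through without further work.
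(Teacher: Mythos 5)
Your proof is correct. Note that the paper itself offers no argument for this lemma---it is stated as ``well-known'' and used as a black box---so there is no authorial proof to compare against; your appeal to character orthogonality for the easy direction and to Fourier inversion (with the observation that the nontrivial characters are closed under $\chi\mapsto\overline{\chi}$, so the hypothesis kills all nontrivial Fourier coefficients) is exactly the standard argument one would supply, and it is complete. The only implicit assumption is that $H$ is finite, which is the intended reading throughout the paper.
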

One can see that $G$ and $\{1\}$ are spectral sets and tiles so in these cases the tile-spectral direction automatically holds.
Therefore, we may assume $A \subsetneq G$ and $B \subsetneq G$. Hence, there are
$1\ne \chi,\chi' \in \widehat{G}$ with $\chi(A)\ne 0$ and $\chi'(B)\ne 0$ by Lemma \ref{lemcons}. Then it follows from $\widehat{{\bf 1}}_A\cdot\widehat{{\bf 1}}_B=|G|\cdot\delta_{1}$ that $\chi(B) =0$ and $\chi'(A)=0$.
\begin{lem}\label{lemsubgroup}

Let $G$ be an abelian group, which is the direct sum of subgroups $B$ and $P$. Let $A \subset G$ such that $A\bigoplus B=G$. Then $A$ is spectral and its spectrum is $P$.
\end{lem}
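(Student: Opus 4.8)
The plan is to verify directly that $(A,P)$ is a spectral pair, using the criterion recalled in the introduction: since $|A|=|P|$, it suffices to show that the characters $(\chi_l)_{l\in P}$ are pairwise orthogonal on $A$. First I would record the cardinality count: from $A\bigoplus B=G$ we get $|A|=|G|/|B|$, and from $G=B\oplus P$ we get $|P|=|G|/|B|$, so $|A|=|P|$. Next, because $l\mapsto\chi_l$ is a homomorphism $G\to\widehat G$, one has $\chi_l\overline{\chi_{l'}}=\chi_{l-l'}$, so the orthogonality relation $\sum_{a\in A}\chi_l(a)\overline{\chi_{l'}(a)}=0$ for distinct $l,l'\in P$ is exactly the statement that $\chi_d(A)=0$ for every $d\in P\setminus\{0\}$. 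Thus the whole lemma reduces to this single vanishing claim.

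To establish it I would exploit the tiling. Since the map $A\times B\to G$, $(a,b)\mapsto a+b$, is a bijection, for any character $\chi$ one has $\chi(G)=\chi(A)\chi(B)$; when $\chi$ is nontrivial $\chi(G)=0$, so $\chi(A)\chi(B)=0$. (This is the pointwise form of $\widehat{\mathbf 1}_A\cdot\widehat{\mathbf 1}_B=|G|\cdot\delta_1$ used above.) Fix $d\in P\setminus\{0\}$; then $\chi_d$ is nontrivial, so it remains only to show $\chi_d(B)\neq 0$, as this forces the other factor $\chi_d(A)$ to vanish. Since $B$ is a subgroup, $\chi_d(B)=\sum_{b\in B}\chi_d(b)$ equals $|B|$ if $\chi_d$ is trivial on $B$ and $0$ otherwise; hence the point is to show that every character indexed by an element of $P$ is trivial on $B$, i.e. $P$ lies in the annihilator $B^{\perp}=\{\chi\in\widehat G:\chi|_B\equiv 1\}$.

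This last step is the crux. Here I would use that the decomposition $G=B\oplus P$ is compatible with the duality, so that $B^{\perp}$ is precisely the complementary subgroup $P$; for the concrete pairing \eqref{eqeki} on $\Z_p^2\times\Z_q$ this is exactly what happens for the coordinate-type complements that arise. One checks the cardinalities already match, $|B^{\perp}|=[G:B]=|P|$, so it is enough to verify the inclusion $P\subseteq B^{\perp}$, after which $P=B^{\perp}$. Granting $\chi_d|_B\equiv 1$ for all $d\in P$, we obtain $\chi_d(B)=|B|\neq 0$ and therefore $\chi_d(A)=0$. With this vanishing in hand, the functions $(\chi_l|_A)_{l\in P}$ are pairwise orthogonal and each has squared norm $|A|\neq 0$; being $|A|=|P|$ mutually orthogonal nonzero vectors, they form an orthogonal basis of $L^2(A)$, so by the spectral-pair criterion $A$ is spectral with spectrum $P$.

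The main obstacle is precisely the identification of $P$ with the annihilator $B^{\perp}$: without compatibility between the complement $P$ and the chosen self-duality of $G$, the characters $\chi_d$ with $d\in P$ need not be trivial on $B$, and the argument would instead only yield the spectrum $B^{\perp}$ (which is in any case isomorphic to $G/B\cong P$). I would therefore make sure to invoke the explicit form of the pairing, or take $P$ to be the orthogonal complement of $B$, at exactly this point; everything else is the standard bijection-and-factorization computation for tilings.
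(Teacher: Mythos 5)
Your proposal is correct, but it proves the key vanishing $\chi_d(A)=0$ (for $d\in P\setminus\{0\}$) by a different mechanism than the paper. The paper defines characters adapted to the splitting, $\chi_{(u,v)}(x,y)=\chi_u(x)\chi_v(y)$ with $u,x\in P$, $v,y\in B$, and then computes directly: since $A\bigoplus B=G$, every $B$-coset contains exactly one element of $A$, so the projection of $A$ onto $P$ along $B$ is a bijection, whence $\chi_{(u-u',0)}(A)=\sum_{x\in P}\chi_{u-u'}(x)=0$ by Lemma \ref{lemcons}. You instead combine the tiling factorization $\chi(A)\chi(B)=\chi(G)=0$ for nontrivial $\chi$ with the subgroup character sum $\chi_d(B)=|B|\neq 0$ for $\chi_d$ trivial on $B$; the paper never uses the factorization, and you never use the projection bijection. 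Both routes hinge on the crux you correctly isolate: the phrase ``its spectrum is $P$'' only makes sense once the identification $G\cong\widehat G$ is chosen compatibly with the splitting, i.e.\ once $P$ corresponds to the annihilator $B^{\perp}$. The paper resolves this silently by \emph{constructing} the adapted characters, so that triviality on $B$ holds by definition; your version makes the dependence explicit, and rightly so, since with an incompatible pre-fixed pairing the statement literally fails: in $\Z_p^2$ with the standard pairing, take $B=\langle(1,0)\rangle$, $P=\langle(1,1)\rangle$, $A=\langle(-1,1)\rangle$; then $A\bigoplus B=G$ and $G=B\oplus P$, yet $\chi_{(t,t)}(A)=p$ for every $t$, so $P$ is not a spectrum of $A$ (while $B^{\perp}=\langle(0,1)\rangle$ is). What each approach buys: yours yields the conceptually cleaner statement that any tile admitting a subgroup tiling complement $B$ has spectrum $B^{\perp}$, independent of any choice of complement, which is really what Corollary \ref{corspect} needs; the paper's yields a self-contained computation in which the required identification is built in the course of the proof rather than assumed.
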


\begin{proof}
Plainly, $|P|=|A|$. Thus it is enough to prove that for every element $\chi  $ of $\widehat{P}\setminus\{1\}$ we have $\chi(A)= 0$.

We uniquely write the elements of $G$ as $(x,y)$, where $x \in P$ and $y \in B$, since $P\bigoplus B=G$. Now we define $\chi_{(u,v)}\in \widehat{G}$ such as
$$\chi_{(u,v)}(x,y)=\chi_{u}(x)\chi_{v}(y) ~~~~(u,x\in P, ~y,v\in B).$$
In this case for every $u\ne u'$ are in $P$ we have
\begin{equation*}
\begin{split}
&\sum_{(x,y)\in A}\chi_{(u,0)}(x,y) \overline{\chi}_{(u',0)}(x,y) =\sum_{(x,y)\in A}\chi_{(u-u',0)}(x,y)=\\
&\sum_{(x,y)\in A}\chi_{u-u'}(x)1(y)=\sum_{x\in P}\chi_{(u-u',0)}(x,0)=0,
\end{split}
\end{equation*} where $x\in P, y\in B$ and $1\ne\chi_{u-u'}\in \widehat{P}, 1\in \widehat{B}$.
The last equality follows from Lemma \ref{lemcons}.
\end{proof}
Note that $\Z_p^2 \times \Z_q$ is a complemented group, which means that if $K$ is a subgroup of $G$, then there is a subgroup $L$ of $G$ with $K\bigoplus L=G$.
Thus Lemma \ref{lemsubgroup} implies the following.
\begin{cor}\label{corspect}
If $A$ is a subset of $G=\Z_p^2 \times \Z_q$ such that $A\bigoplus B=G$, where $B$ is a subgroup of $G$, then $A$ is spectral.
\end{cor}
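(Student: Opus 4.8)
The plan is to reduce the statement directly to Lemma~\ref{lemsubgroup} by producing a genuine internal direct sum decomposition of $G$ one of whose summands is the given subgroup $B$. The subtlety to keep in mind is that the hypothesis only guarantees $A\bigoplus B=G$ with $B$ a subgroup; the tile $A$ itself need not be a subgroup, so Lemma~\ref{lemsubgroup} cannot be invoked until we have exhibited a \emph{subgroup} complement $P$ of $B$ inside $G$. Thus the entire content of the corollary is the observation that such a $P$ always exists.

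First I would invoke the complemented property of $G=\Z_p^2\times\Z_q$ recorded just above the statement: every subgroup $K\le G$ admits a subgroup $L\le G$ with $K\bigoplus L=G$. Applying this to $K=B$ yields a subgroup $P\le G$ with $B\bigoplus P=G$, so that $G$ is the internal direct sum of the subgroups $B$ and $P$. At this point the hypotheses of Lemma~\ref{lemsubgroup} are met verbatim: $G$ is the direct sum of the subgroups $B$ and $P$, and $A\subset G$ satisfies $A\bigoplus B=G$. The lemma then delivers that $A$ is spectral, with $P$ serving as a spectrum, which is exactly the assertion to be proved.

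The only point that deserves a remark is why $G$ is complemented, and here I do not expect any real obstacle: the complementation can be arranged along the primary decomposition $G=\Z_p^2\times\Z_q$. Indeed, the $p$-part $\Z_p^2$ is a vector space over $\F_p$, in which every subspace is a direct summand, and the $q$-part $\Z_q$ is a one-dimensional vector space over $\F_q$. Since any subgroup of $G$ splits as the product of its intersections with the two primary components, it follows that every subgroup of $G$ admits a subgroup complement, and the reduction above goes through unconditionally. Hence the corollary is an immediate consequence of Lemma~\ref{lemsubgroup} together with this structural fact.
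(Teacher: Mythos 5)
Your proposal is correct and matches the paper's argument exactly: the paper also derives the corollary by noting that $G=\Z_p^2\times\Z_q$ is a complemented group, taking a subgroup complement $P$ of $B$, and then applying Lemma~\ref{lemsubgroup}. Your additional justification of the complemented property via the primary decomposition is a sound elaboration of what the paper leaves as a remark.
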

\medskip
\noindent
{\bf Proof of tile-spectral direction of Theorem \ref{thm1}.}

Note that in our situation $G=\Z_p^2 \times \Z_q$.

\noindent
\underline{{\bf Case 1:} $|A|=q, ~|B|=p^2$} \\
Since $p\nmid|A|$, by Corollary \ref{cor1}, it follows that $\phi(A)\ne 0$ if $\phi\in \widehat{G}$ such that $o(\phi)=p$. We have already seen that $\phi(A)\phi(B)=0$ if $\phi \ne 1$. Thus $\phi(B)=0$ for every character of order $p$. Note that, it follows from this argument that $\Z_p^2$ is a spectrum for $B$ since $|B|=p^2=|\Z_p^2|$ and $B$ vanishes on each nonzero elements of $\widehat{\Z}_p^2$, so the elements of $(\chi_l)_{l\in \Z_p^2}$ are pairwise orthogonal. Thus $\widehat{\Z}_p^2$ is an orthogonal basis on $B$.

Similarly, $q\nmid |B|$ implies that $\chi(B)\ne 0$ for all $\chi\in \widehat{G}$ of order $q$. Hence $\chi(A)=0$ for all $\chi \in \widehat{G}$ with $o(\chi)=q$. In this case $\Z_q$ is a spectrum for $A$ since $|A|=q=|\Z_q|$ and hence $\widehat{\Z}_q$ is an orthogonal basis on $A$, as above.




\noindent
\underline{{\bf Case 2:} $|A|=p, ~|B|=pq$} \\
Now, we prove that $A$ is spectral.
First we show that there exists a character $\chi_{(a,0)}$ for some $a\in \Z_p^2\setminus\{0\}$ (which is of order $p$),
such that $\chi_{(a,0)}(A)=0$. As above, this immediately implies that the subgroup generated by $\chi_{(a,0)}$ is a spectrum for $A$.

By contradiction, we assume that $\chi_{(a,0)} (A)\ne 0$ for every $1\ne \chi_{(a,0)} \in \widehat{G}$. Hence $\chi_{(a,0)}  (B)=0$ for every $1\ne \chi_{(a,0)} \in \widehat{G}$. 
We can uniquely write the elements of $G$ as $(x,y)$, where $x \in \Z_p^2$ and $y \in \Z_q$. Let the multiset $\overline{B}$ be defined as $\overline{B}=\{(x, 0) \mid (x,y) \in B \}$. Then $\overline{B}$ is a constant function on $\Z_p^2$ by Lemma \ref{lemcons} so $p^2 \mid |\overline{B}|=|B|$. This is a contradiction, which implies the existence of $\chi_{(a,0)}$ of order $p$ satisfying  $\chi_{(a,0)}(A)=0$ for some $a\in \Z_p^2\setminus\{0\}$.

Note that, the same argument implies that there exists a character $\chi_{(a',0)}  $ for some $a'\ne a\in \Z_p^2\setminus\{0\}$
such that $\chi_{(a',0)}(B)=0$.

Now we show that $B$ is spectral.
Since $q\nmid |A|$ we have $\chi_{(0,b)}(B)=0$ for every $\chi_{(0,b)} \in \widehat{G}$ of order $q$. If $\psi(B)=0$ for every element of the group generated by $\chi_{(a',0)}$ and $\chi_{(0,b)}$ ($b \ne 0)$, then $B$ is spectral.


Therefore we may assume $\chi_{(c,d)}(B)\ne 0$  for some\footnote{If $\chi_{(c,d)}(B)\ne 0$ for some $c \ne 0$, $d \ne 0$, then $\chi_{(xc,yd)}(B)\ne 0$  with $x \in \Z_p\setminus\{0\}$, $y \in \Z_q\setminus\{0\}$.} $\chi_{(c,d)} \in \widehat{G}$ with $o(\chi_{(c,d)})=pq$. Then $\chi_{(c,d)}(A)=0$. 
Let $A_{(c,d)}=|\{(x,v) \in A \mid  \langle x-u,c\rangle=0 \}|$ denote (as in \eqref{eqproj}) the {\it projection} of $A$ to the subgroup isomorphic to $\Z_{pq}$ which is generated by
$(c,0)$ and $(0,d)$. By Lemma \ref{twoprimes}, $A_{(c,d)}$ is the sum of $\Z_p$-cosets and $\Z_q$-cosets. Plainly, $A_{(c,d)}$ is a $\Z_p$-coset since $|A|=p$. We may assume $0 \in A$. Thus, the set $A_{(c,d)}$ is a subgroup isomorphic to $\Z_p$.
This implies that $A_{(c,d)}$ consists of elements whose $q$-coordinate is $0$. It follows from the definition of $A_{(c,d)}$ that the same holds for the elements of $A$.
We obtain that $A$ is contained\footnote{$\Z_p^2$ embeds uniquely and naturally in $\Z_p^2 \times \Z_q$.} in $\Z_p^2\le G$.
Let $B_i= \{ (x,i) \in B \mid x \in \Z_p^2\}$, where $i \in \Z_q$.
Then for every $i \in \Z_q$ we have $A \bigoplus (B_i-i)=\Z_p^2$.


For $(a,0) \in A-A \subset \Z_p^2\times \Z_q$ let $a' \in \Z_p^2 $ be an element of $\Z_p^2$ satisfying $\langle a,a'\rangle =0$. Then $\chi_{(a',0)}(A)\ne 0$, otherwise $\chi_{(a',0)}(A)= 0$ would imply that $A_{(a',0)}$ is equidistributed on the cosets of the subgroup generated by $(a,0)$  and $(0,b)$ ($b \in \Z_q \setminus \{0\}$). As $|A|=p$, this would imply that any of these cosets contains exactly one element of $A$, contradicting the fact that there are two elements of $A$ whose difference is $(a,0)$ (i.e. they are contained in the same coset).

We may consider $A$ and  $(B_i-i)$ ($i \in \Z_q$) as subsets of $\Z_p^2$ and we identify $\chi_{(a',0)} \in \widehat{G}$ with $\chi_{a'}$ which is an element of $\widehat{\Z}_p^2$.
Since $A \bigoplus (B_i-i)=\Z_p^2$ for every $i \in \Z_q$ and $\chi_{a'}(A)\ne 0$, we have $\chi_{a'}(B_i-i)= 0$ for all $i \in \Z_q$.
This means that all $(B_i-i)$ as subsets of $\Z_p^2$
are equidistributed on the cosets of $\langle a \rangle$. Since $|B_i|=p$, the set $B_i\subset G$ has one element on each $\langle (a,0) \rangle$-coset. As $(a,0)$ is independent from the choice of $i$, we have $B \bigoplus \langle (a,0) \rangle=G$ so
Corollary \ref{corspect} gives that $B$ is spectral.
\qed

\section{Spectral-Tile}
The purpose of this section is to prove that every spectral set in $\Z_p^2 \times \Z_q$ tiles.

\medskip
\noindent
{\bf Proof of spectral-tile direction of Theorem \ref{thm1}.}

Let $(S,\La)$ be a spectral pair.
Assume $|S|= |\La|>1$, since if $|S|=1$, then $S$ is a tile.
 It is proved in \cite{Z_p^2, MalKol}  that Fuglede's conjecture holds for every proper subgroup of $\Z_p^2 \times \Z_q$.
 Hence we may assume neither $S$ nor $\La$ are contained in a proper subgroup of $\Z_p^2 \times \Z_q$, see Lemma 4.3 and Lemma 4.4 in \cite{mi}.

 We write that $m\mid\mid |S|$, if $gcd(|G|,|S|)=m$.
 We distinguish several cases as follows.
\noindent
\underline{{\bf Case 1:} $p^2\mid\mid |S|$} \\
In this case either $S$ is a tile or there are two elements of $S$ that only differ in their $q$-coordinates.
Then $\chi(\La)=0$ for some $o(\chi)=q$. By Corollary \ref{cor1}, $q\mid |\La|=|S|$, a contradiction.

\noindent
\underline{{\bf Case 2:} $pq\mid\mid |S|$} \\
If $p+1$ elements of $S$ are contained in a $\Z_{p}^2$-coset, then every nonzero element in $\Z_{p}^2$ has a nonzero multiple which is in $S-S$.
 In this case we say that {\it every direction of $\Z_p^2$ appears in $S$.}
Hence for all $1 \ne \chi\in \widehat{\Z}_p^2 \cong \widehat{H} \le \widehat{
G}$ we have $\chi(\La)=0$. This means that $\La$ vanishes in every nontrivial element of $\widehat{H}$, so does the projection $\overline{\La}=\{(x,0)\mid (x,y)\in \La \}$ of $\La$ onto $\Z_p^2$. Then by Lemma \ref{lemcons}, it follows that $p^2\mid |\overline{\La}|=|\La|$, which contradicts our assumption.

Therefore we can assume that $S$ has exactly $p$ elements in every $\Z_p^2$-coset. Further this argument implies that there is an $a \in \Z_p^2$ such that the nonzero multiples of $(a,0)$ do not appear in the set $S-S$.
Since $S=pq$, this implies that each $\langle( a,0) \rangle$-coset contains exactly one element of $S$, which shows that $S$ tiles $G$ with tiling complement $\langle (a,0) \rangle$.

\noindent
\underline{{\bf Case 3:} $pq\ge |S|$} \\
Following the argument of the previous case we have that $p^2 \mid |S|$ or $S$ is a tile. The former case is handled in Case 1.

\noindent
\underline{{\bf Case 4:} $1\mid\mid |S|$ or $q\mid\mid |S|$} \\
As $p\nmid |S|$ there cannot be two elements of $S$ in the same $\Z_p^2$-coset. Then either $S$ has $q$ elements, and it is a tile with tiling complement $\Z_p^2$ or $|S|=|\La|<q$. Hence we may assume $1\mid\mid |S|$.
By Corollary \ref{cor1}, we have $\chi(S)\ne 0$, if $o( \chi)\in \{1,p,q\}$.
As $|\La| >1$, there exists a $\chi_{(a,b)}$ for some $(a,b) \in \La-\La$ with $o(\chi_{(a,b)})=pq$ (i.e. $a \ne 0$, $b \ne 0$) satisfying $\chi_{(a,b)}(S)=0$.
By Lemma \ref{twoprimes}, $S_{(a,b)}$ (the projection of $S$ defined by \eqref{eqproj}) is the sum of $\Z_p$-cosets and $\Z_q$-cosets. This implies that $|S|=|S_{(a,b)}| = kp+lq$, where $k,l\ge 0$. As $1<|S|<q$ we have $l=0$ and then $p\mid |S|$, a contradiction.

\noindent
\underline{{\bf Case 5:} $p\mid\mid |S|$ and $|S|<pq$} \\
Now we assume that there exists a spectral pair $(S,\La)$ such that $p\mid\mid |S|$, $1<|S|<pq$ and neither $S$ nor $\La$ is contained in any proper subgroup of $G$.
We show that this leads to a contradiction.

If $\La$ is not contained in a $\Z_p^2$-coset or a $\Z_q$-coset, then there is $\chi_{(a,b)} \in \widehat{G}$ ($0\ne a \in \Z_p^2$, $0 \ne b \in \Z_q$) such that $\chi_{(a,b)}(S)=0$ and $o(\chi_{(a,b)})=pq$.

By applying Lemma \ref{twoprimes} for $S_{(a,b)}$ we obtain that $|S_{(a,b)}|=|S|=kp+lq$ with $k, l \in \N$. Then $l=0$, since $p \mid |S|$ and $|S|<pq$. Therefore, $S_{(a,b)}$ is a sum of $\Z_p$-cosets.
Thus, we have $a_i \cdot p$ elements of $S_{(a,b)}$ in the coset $i+\Z_{p}^2$ for $i= 0 \stb q-1$.
The multiset $S_{(a,b)}$ is considered as a projection of $S$ and it is easy to see that an element and its image have the same $q$-coordinate.
Finally, we obtain that $a_i \cdot p$ elements of $S$ are in the coset $i+\Z_{p}^2$ for $i= 0 \stb q-1$.

Similar argument holds if we change the role of $S$ and $\La$ with possibly different sequence of $b_i$'s satisfying that $b_i \cdot p$ elements of $\La$ are contained in the coset $i+\Z_p^2$ for $i=0 \stb q-1$.

If $a_i>1$ (resp. $b_i>1$) for some $i=0 \stb q-1$, then we have at least $2p>p$ elements of $S$ (resp. $\La$) in a $\Z_p^2$-coset. Then every direction of $\Z_p^2$ appears in $S$ (resp. $\La$).
Thus we may apply Lemma \ref{lemcons} to $\overline{\La}=\{(x,0)\mid (x,y)\in \La \}$ (resp. $\overline{S}$), so as above we have $p^2 \mid |S|=|\La|$, a contradiction.
From now on we assume that every $\Z_p^2$-coset contains either $0$ or $p$ elements of $S$ and $\La$.

Let $k$ denote the number of $\Z_p^2$-cosets containing $p$ elements of $S$, whence $|S|=k\cdot p$.  If $k=1$, then $S$ is contained in exactly one $\Z_p^2$-coset. By the assumption that $0\in S$ this means that $S$ contained in a proper subgroup of $S$ which is excluded, hence we can assume that $k \ge 2$.
Moreover, $2 \le k < q$, since  $|S|<pq$.
Note that $\overline{S}$ is a set, since $q \nmid |S|$. Otherwise, $S-S$ contains an element of order $q$ and the fact that $(\La,S)$ is also a spectral pair implies $q \mid |\La|=|S|$. Further  $|\overline{S}|>p$ so every direction of $\Z_p^2$ appears in $\overline{S}$.

We claim that if $S-S$ contains an element $(a,0)$ of order $p$ ($a \ne 0$), then $S-S$ does not contain $(ca,b)\in G$, where $b \in \Z_q\setminus \{0\}$, $c \in \Z_p \setminus\{0\}$, and vice versa, if $S-S$ contains an element $(a,b)$ of order $pq$ ($a \ne 0, b\ne 0$), then $S-S$ does not contain $(ca,0)$, where $c \in \Z_p \setminus\{0\}$). The same holds for $\La-\La$.
Indeed, suppose $(a,0) \in \Z_p^2 \cap (S-S)$.  If $\langle a , a' \rangle=0$ for some $0 \ne a' \in \Z_p^2 $, then $\La-\La$ cannot contain $(c'a',x)$ with $0 \ne c' \in \Z_p ,0 \ne x \in \Z_q$, since otherwise $\chi_{(c'a',x)}(S)=0$ that we exclude now. It follows from Lemma \ref{twoprimes} that $S_{(c'a',x)}(=S_{(a',x)})$ is the sum of $\Z_p$-cosets and $\Z_q$-cosets so $|S|=kp+lq$. Since $p \mid |S|$
and $|S|<pq$, we have $l=0$ so $S_{(c'a',x)}$ as a multiset is the sum of $\Z_p$-cosets only. We have already assumed that each $\Z_p^2$-coset contains at most $p$ elements of $S$. Therefore, $S_{(c'a',x)}$ is a set. This contradicts the fact that $S-S$ contains $(a,0)$ so $(c'a',x) \not\in \La-\La$. 

We have seen $\overline{\La}$ contains every direction of $\Z_p^2$ and $(c'a',x) \not\in \La-\La$ for any $0 \ne c' \in \Z_p ,0 \ne x \in \Z_q$, hence $(c''a',0) \in \La-\La$ for some $0 \ne c'' \in \Z_p$. Repeating the argument above for the spectral pair $(\La,S)$ we obtain that $(ca,b) \not\in S-S$ for every $c \in \Z_p \setminus\{0\}, b \in \Z_q \setminus \{ 0\}$. This implies the statement that we claimed for $S-S$. Analogue argument verifies the statement for $\La-\La$.
Note that in addition, the argument above implies that if $(a,0)\in S-S$ ($a \ne 0$), then there is a $0 \ne c'' \in \Z_p$ such that $(c''a',0)\in \La-\La$, where $\langle a , a'\rangle=0$. The same holds by changing the role of $S$ and $\La$.

Now we take two elements of $\La$ in a $\Z_p^2$-coset, whose difference is $(a', 0)$, where $0\ne a' \in \Z_p^2$. Then since $(S,\La)$ is a spectral pair, for $0 \ne a  \in \Z_p^2$ with $\langle a , a'\rangle=0$ we have that $\bar{S}$ is equidistributed on the $\langle a \rangle$-cosets. Thus each $\langle a \rangle$-coset contains $k$ elements of $\overline{S}\subset \Z_p^2$.
Since $(a',0)\in \La-\La$, by the last result of the previous paragraph, we get that for any $0\ne x\in \Z_p^2$ the difference of preimages of any two elements of $\overline{S} \cap (\langle a \rangle+x)$ is of the form $(ca,0)$ for some $0 \ne c \in \Z_p$.
Thus the preimages of the elements of $\overline{S} \cap (\langle a \rangle+x) \in G$ are contained in a $\langle (a,0) \rangle$-coset. Since there is a $\Z_p^2$-coset containing exactly $p$ elements of $S$ and each $(a,0)$-coset contains either $k$ or $0$ elements of $S$, we have $k \mid p$. Since $k \ge 2$, it follows that $k=p$. Then $|S|=p^2$, which contradicts our assumption that $p\mid\mid |S|$.
\qed

\section*{Acknowledgement} The authors would like to express their gratitude to the referee for the valuable comments that essentially influence the quality of the paper.

Research by the first author was supported by a Premium Postdoctoral Fellowship of the Hungarian Academy of Science, and by NKFIH (Hungarian National Research, Development
and Innovation Office) grant K-124749.

The second author is supported by the J\'anos Bolyai Research Scholarship of the Hungarian Academy of Sciences and by the \'UNKP-20-5 New National Excellence Program of the Ministry for Innovation and
Technology from the source of the National Research, Development and Innovation Fund (NKFIF) and NKFIH SNN 132625.

Support provided from the NKFIF of Hungary, financed under the Thematic Excellence
Programme no.
2020-4.1.1.-TKP2020 (National Challenges Subprogramme) funding scheme.

\end{document}